\title{The Noether inequality for Gorenstein minimal 3-folds}
\author{Jungkai A. Chen and Meng Chen}
\address{\rm National Center for Theoretical Sciences, Taipei Office, and Department of Mathematics, National Taiwan University, Taipei, 106, Taiwan}
\email{jkchen@math.ntu.edu.tw}
\address{\rm Department of Mathematics \& LMNS, Fudan University, Shanghai, 200433, People's Republic of China}
\email{mchen@fudan.edu.cn}
\thanks{The first author was partially supported by
National Science Council of Taiwan and NCTS. The second author was supported by National Natural Science Foundation of China
(\#11171068, \#11121101, \#11231003) and Doctoral Fund of Ministry of Education of China (\#20110071110003)}
\newcommand{\bC}{{\mathbb C}}
\newcommand\lrw{\longrightarrow}
\newtheorem{thm}{Theorem}[section]
\newtheorem{cor}[thm]{Corollary}
\newtheorem{op}[thm]{Open problem}
\theoremstyle{definition}
\newtheorem{defn}[thm]{Definition}
\theoremstyle{remark}
\begin{document}
\numberwithin{equation}{section}
\begin{abstract} We prove the Conjecture  of Catenese--Chen--Zhang: the inequality
$K_X^3\geq \frac{4}{3}p_g(X)-\frac{10}{3}$ holds for all projective Gorenstein minimal 3-folds $X$ of general type.
\end{abstract}
\maketitle

\pagestyle{myheadings} \markboth{\hfill J. A. Chen and M. Chen
\hfill}{\hfill The Noether inequality\hfill}

\section{\bf Introduction}
In the classification theory of algebraic varieties, the Noether
inequality, which asserts that $K^2 \ge 2p_g-4$ for minimal
surfaces of general type, plays a pivotal role. It is thus natural
and important to explore the higher dimensional analogue.

There are several attempts toward this direction. A naive guess is that, for minimal variety $X$ of general type,
$K_X^{\dim X} \ge 2( p_g(X) -\dim X)$, which holds in dimension $1$ and $2$. However, Kobayashi \cite{Kob} constructed  examples of canonically polarized threefolds with $p_g(X)=3k+4$ and $K_X^3=4k+2$ for $k
\ge 1$. Hence the inequality $K_X^3 \ge 2 p_g(X) -6$ fails in dimension 3 and one can only expect that $K_X^3 \ge \frac{4}{3} p_g(X) -\frac{10}{3}$.

The aim of this paper is to confirm the conjecture (\cite[Conj. 4.4]{CCZ}, in 2006) of Catanese--Chen--Zhang and to prove the following:

\begin{thm}\label{main} The inequality
$$K_X^3\geq \frac{4}{3}p_g(X)-\frac{10}{3}$$ holds for all projective Gorenstein minimal 3-folds $X$ of general type.
\end{thm}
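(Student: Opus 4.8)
The plan is to study the canonical map of $X$ via a resolution $\pi: X' \to X$ and a birational modification so that the movable part of $|K_{X'}|$ becomes base-point free. Write $|M|$ for this movable part and let $Z$ be the generic irreducible element; the key dichotomy, as in the surface case, is whether the canonical image has dimension $1$, $2$, or $3$. First I would dispose of the case $\dim \Phi_{|K_X|}(X) = 3$: here $K_X^3 \geq \deg \Phi_{|K_X|} \cdot (\text{degree of the image}) \geq p_g(X) - 2$ by a standard argument on the degree of a nondegenerate subvariety of $\mathbb{P}^{p_g-1}$, which is far stronger than what we need. Similarly the fibration case $\dim = 1$ over a curve is handled by a Clifford-type / slope inequality argument on the fibration $f: X' \to B$: the general fiber $F$ is a minimal surface of general type, and combining $K_{X'}|_F \geq K_F$ with the surface Noether inequality and a Harder--Narasimhan / Xiao-type bound on $f_*\omega_{X'/B}$ gives $K_X^3 \geq \frac{4}{3}p_g - \text{const}$ with room to spare.

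The heart of the matter is the case $\dim \Phi_{|K_X|}(X) = 2$, i.e.\ $|M|$ defines a fibration $f: X' \to S$ onto a surface with general fiber a curve $C$ of genus $\geq 2$. Here I would set up $|M| \geq f^*H$ for $H$ the movable part on $S$, and bound $p_g(X)$ from above by $h^0(S,H) \leq \frac{1}{2}H^2 + 2$ (if $H^2 > 0$, using that the canonical image spans $\mathbb{P}^{p_g-1}$ and Bogomolov-type bounds) or handle the $H^2 = 0$ subcase where $H$ itself is composed with a pencil. The critical numerical input is a lower bound for $K_X^3 = K_{X'}^3$ in terms of $M$: using $K_{X'} \geq M$ (or more carefully $\pi^*K_X \geq M$), one gets $K_X^3 \geq (\pi^*K_X)^2 \cdot M \geq (\pi^*K_X \cdot M) \cdot$ (something), and then $\pi^*K_X|_Z \cdot$ data, where $Z$ is the surface in $|M|$. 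One reduces to understanding $\pi^*K_X \cdot C$ and the self-intersection $M^2 \cdot \pi^*K_X$ on the generic surface $Z$, and here the inequality $(\pi^*K_X|_Z)^2 \geq$ (genus data of $C$) via the surface-fibration structure on $Z$ closes the estimate.

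The main obstacle I expect is the boundary/extremal case: making the constant come out exactly as $\frac{4}{3}p_g - \frac{10}{3}$ rather than something slightly weaker requires tracking the contributions very carefully, in particular controlling the difference $\pi^*K_X - M$ (the fixed part, including exceptional divisors) and its intersection with $M^2$, and handling small values of $p_g$ separately by ad hoc or Reid–Fletcher–type plurigenus arguments on Gorenstein 3-folds. The Kobayashi examples show the bound is sharp, so there is genuinely no slack in the extremal stratum; the proof must therefore identify precisely when $Z$ is itself a surface with $K_Z^2$ near the Noether line and $C$ has genus $2$, and verify the inequality becomes equality there. In writing this up I would isolate the intersection-theoretic lemma bounding $M^3$, $M^2\cdot\pi^*K_X$, $M\cdot(\pi^*K_X)^2$ in terms of $p_g$, $g(C)$, and the geometry of $S$, prove the dichotomy reductions first, and then feed everything into that lemma.
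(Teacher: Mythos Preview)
Your overall architecture is right---the dichotomy on $d=\dim\overline{\varphi_1(X)}$, and the identification of the $d=2$, genus-$2$ fibration as the extremal stratum---but there is a genuine gap at the technical core. The cases $d\neq 2$ and $d=2$ with $g(C)\geq 3$ are already in the literature (and your sketches for them are fine); the smooth minimal case is also known. The content of the theorem is precisely the situation where $X$ has Gorenstein \emph{terminal singularities} lying in $\text{Bs}|M|$. Your proposal takes an unspecified resolution $\pi:X'\to X$ and proposes to ``track the difference $\pi^*K_X-M$'', but for a generic resolution you have no usable comparison between $K_{X'/X}$ and the exceptional part $E':=D_{X'/X}+\mu^*F$ of the canonical system, and this comparison is exactly what drives the sharp constant.

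Concretely: in the critical case $d=2$, $g(C)=2$, $(\mu^*K_X\cdot C)=1$, one restricts to a general $S\in|M_{X'}|$ and must show that the horizontal parts of $E'|_S$ and of $K_{X'/X}|_S$ are the \emph{same} irreducible section $G$ of $f|_S$, and that $2E'_V\geq E_V$ on the vertical side. Both facts follow from an inequality of the form $2D_{X'/X}\geq K_{X'/X}$ for a general $D\in|M|$. This inequality is not automatic; it fails for arbitrary resolutions and requires a specially constructed ``feasible Gorenstein resolution'' built from divisorial contractions to points with minimal discrepancy $1/r$, together with an inductive integrality argument through the higher-index points that appear along the way. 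Once you have $E'_H=E_H=:G$ and $2E'_V\geq E_V$, a short intersection computation on $S$ comparing $(2\mu^*K_X|_S+E'_V)\cdot G$ with $(K_S\cdot G)\geq -2-G^2$ and $(\mu^*K_X|_S-E'_V)\cdot G\geq p_g-2+G^2$ yields $3(\mu^*K_X|_S\cdot G)\geq p_g-4$, and hence $K_X^3\geq \frac{4}{3}p_g-\frac{10}{3}$. Your intersection-theoretic lemma in terms of $M^3$, $M^2\cdot\pi^*K_X$, etc.\ will not by itself produce the section $G$ or the coefficient $\frac{4}{3}$; the missing ingredient is the resolution-specific inequality $2D_{X'/X}\geq K_{X'/X}$.
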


Theorem \ref{main} was proved by the second author \cite{MRL} when
$X$ is canonically polarized and by Catanese--Chen--Zhang
\cite{CCZ} while $X$ is smooth minimal. We refer to the relevant
work \cite{Kob, MRL, JMSJ, CCZ} for more details of the history of this topic.

The main obstacle in proving the above theorem is the existence of
Gorenstein terminal singularities in the base
locus of the canonical linear system $|K_X|$, while $X$ is
canonically fibred by a family of curves of genus 2. By using
certain conceived and explicit resolution of Gorenstein terminal
singularities, which we call {\it feasible Goresntein resolution},
we are able to resolve the base locus and prove the statement.

Throughout we work over the complex number field $\bC$.

\section{\bf Special resolutions to Gorenstein terminal singularities $(X,P)$, pairs $(X,D)$ and linear systems $(X,|M|$)}


{}First of all, we recall the following result of the first author:

\begin{thm}\label{jk} (\cite[Theorem 1.3]{JK}) Let ${X}$ be an algebraic 3-fold with at worst terminal singularities. For any terminal singularity $P \in {X}$,
there exists a sequence of birational morphisms:
$$ \tau_P: Y=X_m \to  X_{m-1} \to \ldots  \to X_1 \to X_0={X},$$
such that $Y$ is smooth on $\tau_P^{-1}(P )$ and, for all $i$,  the morphism $\pi_i:
 X_{i+1} \to X_i$ is a divisorial contraction to a singular point $P_i\in X_i$ of index $r_i \ge 1$ with discrepancy $1/r_i$.
\end{thm}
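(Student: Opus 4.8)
The plan is to resolve $P$ by a sequence of \emph{explicit divisorial extractions}, organised into two phases: first one drives the indices of the terminal points lying over $P$ down to $1$, and then one resolves the resulting Gorenstein points --- equivalently, the compound Du Val points --- by ordinary point blow-ups. The main inputs are the classification of three-dimensional terminal singularities (Reid, Mori): analytically $(X,P)$ is either a cyclic quotient singularity $\frac1r(1,-1,b)$ with $\gcd(b,r)=1$, or a cyclic quotient of order $r$ of an isolated compound Du Val hypersurface singularity; together with the explicit divisorial contractions to such singularities due to Kawamata, Hayakawa and Kawakita.

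\emph{Phase 1 (lowering the index).} Suppose some point $P_i$ over $P$ in the current model $X_i$ has index $r_i\ge 2$. I extract the divisor of minimal discrepancy at $P_i$: there is a divisorial contraction $\pi_i\colon X_{i+1}\to X_i$ centred at $P_i$ with a single exceptional divisor $E_i$ and $K_{X_{i+1}}=\pi_i^{*}K_{X_i}+\frac{1}{r_i}E_i$, such that $X_{i+1}$ is again $\bQ$-factorial terminal. In the quotient case this is Kawamata's weighted blow-up with weights $\frac1{r_i}(a,r_i-a,1)$, where $ab\equiv1 \bmod r_i$, whose three affine charts carry only terminal quotient singularities of indices $a$, $r_i-a$ and $1$, all strictly less than $r_i$; in the hyperquotient case the induced singularities are read off Hayakawa's local model. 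In either case the multiset of indices of the singular points over $P$ strictly decreases in the natural well-ordering, so after finitely many such steps every point over $P$ is Gorenstein terminal.

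\emph{Phase 2 (resolving compound Du Val points).} A compound Du Val point $Q$ is a hypersurface singularity of multiplicity $2$, so its blow-up $\pi\colon X'\to X$ is a divisorial contraction whose exceptional divisor is a (possibly singular) quadric surface, with discrepancy $3-2=1$; this realises the case $r_i=1$ of the statement. Using Reid's explicit analysis of $cA_n$, $cD_n$ and $cE_n$ points together with Hayakawa's blow-ups, one checks that $X'$ is again $\bQ$-factorial terminal with only compound Du Val points, and that a suitable complexity invariant of their configuration (for instance the total Milnor number) strictly drops. Iterating resolves all of them and yields $Y=X_m$, which is smooth over $\tau_P^{-1}(P)$; concatenating the two phases produces the asserted tower.

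The crux is Phase 1 in the non-quotient case: one must produce, for every three-dimensional terminal point of index $r$, a divisorial contraction of exceptional discrepancy exactly $1/r$ after which all singular points over $P$ have strictly smaller index. This needs the full force of the explicit classification of three-dimensional terminal singularities and of the divisorial contractions to them --- in particular the fact that a general member of $|-K_X|$ near $P$ is Du Val, which is what keeps the induced singularities tractable. Granting those local models, the termination bookkeeping in both phases is routine.
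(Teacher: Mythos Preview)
The paper does not actually prove this theorem; it is quoted from \cite{JK}, and the paper merely records the \emph{shape} of the construction in the numbered Steps (1)--(3) that follow the statement. Nonetheless, that recorded shape already shows that your Phase~2 cannot work as written.

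Your Phase~2 asserts that the ordinary blow-up of a compound Du Val point is a divisorial contraction with discrepancy $1$, and that the result carries only cDV points again. Both claims fail. For a $cA$ point such as $xy+z^3+t^3=0$ the tangent cone is $xy=0$, so the ordinary blow-up has a \emph{reducible} exceptional locus and is not a divisorial contraction in the sense required by the statement; more generally the ordinary blow-up of a cDV point need not be $\bQ$-factorial or even terminal. The discrepancy-$1$ extraction used in Step~(1) of the paper's outline is instead a carefully chosen \emph{weighted} blow-up (this is the content of \cite{JK}, drawing on Kawakita's classification), and such weighted blow-ups typically create quotient singularities of index $>1$ on $E_1$. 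That is exactly why Step~(2) is there: after each discrepancy-$1$ extraction one must run a Gorenstein partial resolution to kill the newly created higher-index points before returning to a purely Gorenstein model.

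So the two phases you describe are the right ingredients, and your Phase~1 is essentially correct (Kawamata for cyclic quotients, Hayakawa otherwise), but they cannot be run sequentially. The actual construction in \cite{JK}, and summarised in the paper, \emph{interleaves} them: blow up a Gorenstein point with discrepancy $1$, resolve the resulting non-Gorenstein points back to a Gorenstein model with strictly ``milder'' cDV singularities, and repeat. The termination invariant is attached to this composite fG partial resolution, not to either phase alone.
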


Indeed, given a Gorenstein terminal singularity $(P \in {X})$, the
resolution can be constructed in explicit as follows.

\begin{enumerate}
\item  Take a divisorial contraction $\pi_1: X_1 \to {X}$
contracting $E_1$ to the point $P$ with discrepancy $1$, i.e.
$K_{X_1}=\pi_1^* (K_{{X}}) +E_1$.

\item
 If there are some higher index points on $E_1 \subset X_1$, there exists a Gorenstein partial resolution
$$ X_{n_1} \to X_{n_1-1} \to \ldots \to X_2 \to X_1$$ such that,
\begin{itemize}
\item for any $j>0$, the birational morphism $\pi_{j+1}: X_{j+1}
\to X_j$ is a divisorial contraction to a point $P_j \in X_j$ of
index $r_j>1$ with discrepancy $\frac{1}{r_j}$; \item $X_{n_1}$
has only Gorenstein terminal singularities of which each one is
``milder'' than $P \in {X}$.
\end{itemize}

\item
Inductively, we have a sequence of birational morphisms
$$\tau_P \colon  Y=X_{n_l} \to X_{n_{l-1}} \to \ldots \to X_{n_1} \to {X},$$
such that the birational morphism $\tau_{j+1}: X_{n_{j+1}} \to
X_{n_j}$ is constructed parallel to those in Steps (1) and (2),
$X_{n_{j+1}}$ has only Gorenstein terminal singularities and $Y$
is non-singular on $\tau_P^{-1}( P)$.
\end{enumerate}
\medskip

\begin{defn}
Given a Gorenstein terminal singularity $P \in {X}$, the
birational map $X_{n_1} \to {X} \ni P$ constructed as in Steps (1)
and (2) is called a {\it feasible Gorenstein partial resolution of
$P \in {X}$}, or {\it fG partial resolution} for short. The
birational morphism $ \tau_P: Y \to {X} \ni P$ constructed as in
Step (3) is called a {\it feasible resolution of $P \in {X}$}.
Clearly, $X_{n_{j+1}}$ is a fG partial resolution of $X_{n_j}$ for
any $j>0$.
\end{defn}

Now given a Gorenstein projective 3-fold  $X$  with terminal singularities.  Let $P \in X$ be a singular point and $D$ be an effective Cartier divisor on $X$ with $P\in D$.
We may consider a fG partial resolution of $P \in X$, say
\begin{equation} Z:=X_{n} \to \ldots \to X_1 \to  X_0=X, \label{pr}
\end{equation}
so that  the birational morphism $\pi_P: Z \to X$ is composed of a sequence of divisorial contractions $X_{i+1} \to X_i$ to  points $P_i \in X_i$ of index $r_i > 1$ with discrepancy $1/r_i$ for all $i >1$ together with a divisorial contraction $X_1 \to X$ to $P\in X$ with discrepancy $1$. Clearly, $Z$ is still a projective Gorenstein 3-fold with at worst terminal singularities.

For any $i>0$, let $D_i$ be the proper transform of $D$ in $X_i$
and write $D_{Z/X}:=\pi_P^*(D)-D_n$. Similarly, let $K_i$ be the
canonical divisor of $X_i$ and write $K_{Z/X}:=K_Z-\pi_P^*(K_X)$.
Also let $E_i$ be the exceptional divisor of the contraction
morphism $X_i \to X_{i-1}$ and $E_{i, X_j}$ denote the proper
transform of $E_i$ on $X_j$.

\begin{thm}\label{P} Given a projective Gorenstein 3-fold  $X$  with terminal singularities.  Let $P \in X$ be a singular point and $D$ be an effective Cartier divisor on $X$ with $P\in D$. Let $\pi_P: Z \to X$ be the fG partial resolution as in (\ref{pr}). Then  $D_{Z/X} \ge K_{Z/X}$.
\end{thm}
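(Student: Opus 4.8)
The plan is to prove the inequality divisor-by-divisor along the tower \eqref{pr}, reducing everything to a single divisorial contraction. Concretely, for each $i$ with $1 \le i \le n-1$, write $\pi_i \colon X_{i+1} \to X_i$ for the contraction of $E_{i+1}$ to the point $P_i \in X_i$, so that $K_{X_{i+1}} = \pi_i^*(K_{X_i}) + \frac{1}{r_i} E_{i+1}$ with $r_1 = 1$ and $r_i > 1$ for $i > 1$. Since $D$ is Cartier and $P \in D$, its proper transform $D_i$ on $X_i$ still passes through $P_i$ (or, if not, the local contribution only helps), so one has $\pi_i^*(D_i) = D_{i+1} + a_i E_{i+1}$ with $a_i = \mathrm{mult}_{E_{i+1}}\pi_i^*(D_i) \ge 1$, simply because $\pi_i^*(D_i)$ is an integral Cartier divisor on $X_{i+1}$ containing $E_{i+1}$ in its support (it contains $\tau_i^{-1}(P_i)$ set-theoretically and $E_{i+1}$ is a divisor there). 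The key numerical point is then the elementary comparison $a_i \ge 1 > \frac{1}{r_i}$ for $i > 1$, and $a_1 \ge 1 = \frac{1}{r_1}$ for the first blow-up.

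The main step is to package these local inequalities into the global statement. Pulling back $D$ all the way up gives $\pi_P^*(D) = D_n + \sum_i b_i E_{i,X_n}$ where $b_i$ is the total multiplicity accumulated, and similarly $K_{Z/X} = \sum_i c_i E_{i,X_n}$ with $c_i$ the accumulated discrepancy coefficient; both $b_i$ and $c_i$ are obtained from the $a_j$'s and $\frac{1}{r_j}$'s by the same recursive pull-back bookkeeping along $X_n \to X_{i}$. I would argue by induction on $n$: the case $n = 1$ is exactly $D_{X_1/X} = a_1 E_1 \ge E_1 = K_{X_1/X}$. For the inductive step, factor $\pi_P$ as $X_n \to X_{n-1} \xrightarrow{\pi'} X$, apply the inductive hypothesis on $X_{n-1} \to X$ to get $D_{X_{n-1}/X} \ge K_{X_{n-1}/X}$, then pull this back by $X_n \to X_{n-1}$ (which preserves the inequality since pullback of an effective divisor is effective) and add the single-step contribution on $X_n \to X_{n-1}$, where $a_{n-1} E_n \ge \frac{1}{r_{n-1}} E_n$. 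The compatibility $\pi_P^*(D) = (X_n \to X_{n-1})^*(\pi'^*(D))$ and the analogous identity for $K$ make the two contributions add correctly.

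The step I expect to require the most care is verifying that every contracted $E_i$ genuinely appears in $\pi_P^*(D)$ with multiplicity $\ge 1$, i.e.\ that the proper transforms $D_i$ keep passing through the centers $P_i$ of the later contractions. This is where the hypothesis that $D$ is an \emph{effective Cartier} divisor with $P \in D$ is used: by Theorem~\ref{jk}, each $P_i$ lies on the image of $\tau_P^{-1}(P)$, and since $D$ contains $P$ and is Cartier, $\pi_P^*(D)$ is an effective integral divisor whose support contains the whole fibre over $P$, hence every component $E_{i,X_n}$; so $b_i \ge 1$ for all $i$. (If for some $i$ the center $P_i$ happened not to lie on $D_i$, then that $E_{i,X_n}$ does not appear in $\pi_P^*(D)$ at all, but it also contributes only through discrepancies coming from \emph{later} blow-ups centered over it, and one checks those cancel out of $K_{Z/X}$ restricted to the relevant locus — equivalently, one may simply discard such strata. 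The cleanest route is to observe $b_i \ge c_i$ term-by-term from $a_j \ge \frac{1}{r_j}$, which always holds.) Once this multiplicity bookkeeping is pinned down, comparing $\pi_P^*(D) - D_n = \sum b_i E_{i,X_n}$ with $K_Z - \pi_P^*(K_X) = \sum c_i E_{i,X_n}$ coefficientwise yields $D_{Z/X} \ge K_{Z/X}$.
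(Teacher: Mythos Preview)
Your inductive framework matches the paper's, but the single-step comparison you rely on is not valid. You assert that $\pi_i^*(D_i)$ is an integral Cartier divisor on $X_{i+1}$, so that its multiplicity along $E_{i+1}$ is an integer $\ge 1$. This fails for $i\ge 1$: the intermediate variety $X_i$ has a terminal point of index $r_i>1$ at $P_i$, and the proper transform $D_i$ is only $\mathbb{Q}$-Cartier there (for instance $D_1=\pi_1^*D-b_1E_1$, and $E_1$ is not Cartier at the higher-index points of $X_1$). Hence the multiplicity you call $a_i$ is only $\beta_i/r_i$ for some integer $\beta_i\ge 0$, and nothing forces $\beta_i\ge r_i$ or even $\beta_i\ge 1$: the strict transform $D_i$ need not pass through the particular higher-index point $P_i$ at all. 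When $\beta_i=0$ your step contributes $-\tfrac{1}{r_i}E_{i+1}$ to $D_{X_{i+1}/X}-K_{X_{i+1}/X}$, which has the wrong sign; and your parenthetical fallback (``that $E_{i,X_n}$ does not appear in $\pi_P^*(D)$ at all'') is also incorrect, since $P_i$ always lies on some earlier exceptional divisor $E_j$, which is carried with positive coefficient in $\pi_P^*(D)$.

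What actually closes the induction is an integrality argument that your proposal does not contain. The \emph{total} coefficient of $E_{i+1}$ in $K_{X_{i+1}/X}$, respectively in $D_{X_{i+1}/X}$, is an integer---not because $K_{X_i}$ or $D_i$ is Cartier on $X_i$, but because $K_X$ and $D$ are Cartier on the Gorenstein base $X$. Writing these two coefficients as $\tfrac{1}{r}\big(\sum_j a_j\alpha_j+1\big)$ and $\tfrac{1}{r}\big(\sum_j b_j\alpha_j+\beta_i\big)$, the first equals exactly $\big\lceil \tfrac{1}{r}\sum_j a_j\alpha_j\big\rceil$ (here $r>1$ is essential), while the second is an integer $\ge \tfrac{1}{r}\sum_j b_j\alpha_j$, hence $\ge \big\lceil \tfrac{1}{r}\sum_j b_j\alpha_j\big\rceil$. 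Combined with $b_j\ge a_j$ from the inductive hypothesis, this yields the new inequality even when $\beta_i=0$. This ceiling comparison is the substantive idea in the paper's proof, and it is precisely what replaces your unjustified claim $a_i\ge 1$.
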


\begin{proof} 
{}First of all, we have $K_{X_1/X}=E_1$ and $D_{X_1/X}=b_1E_1$,
where $b_1=\text{mult}_P D \in \mathbb{Z}_{>0}$ is the
multiplicity. Clearly, we have $D_{X_1/X} \ge K_{X_1/X}$.

Suppose we have $D_{X_i/X} \ge K_{X_i/X}$.
Write $K_{X_i/X}=\sum_{j=1}^i a_j E_j$, and $D_{X_i/X}=\sum_{j=1}^i b_j E_j$ with $b_j \ge a_j \in \mathbb{Z}$ for all $j$.
Since $\pi_i: X_{i+1} \to X_i$ is a divisorial contraction to a point $P_i$ of index $r>1$ with discrepancy $1/r$. Let
\begin{eqnarray} \pi_i^* (E_{j,X_i}) &=&E_{j, X_{i+1}}+\frac{\alpha_{i,j}}{r} E_{i+1}; \nonumber\\ \pi_i^*(D_i)&=&D_{i+1}+\frac{\beta_i}{r} E_{i+1} \end{eqnarray}
where $\alpha_{i,j} \ge 0$ for each $j$ and $\beta_i\geq 0$.
It follows that
\begin{eqnarray} K_{X_{i+1}/X} & =&\sum_{j=1}^i a_j E_j + (\frac{\sum_{j=1}^i a_j \alpha_{i,j} }{r} +\frac{1}{r}) E_{i+1}; \nonumber\\
D_{X_{i+1}/X} & =&\sum_{j=1}^i b_j E_j + (\frac{\sum_{j=1}^i b_j \alpha_{i,j} }{r} +\frac{\beta_i}{r}) E_{i+1}.
\end{eqnarray}

Since $(X, P)$ is Gorenstein,  both $\frac{\sum_{j=1}^i a_j \alpha_{j} }{r} +\frac{1}{r} $ and $\frac{\sum_{j=1}^i b_j \alpha_{j} }{r} +\frac{\beta_i}{r}$ are positive integers.
Hence
$$ \frac{\sum_{j=1}^i a_j \alpha_{i,j}+1 }{r} =\lceil \frac{\sum_{j=1}^i a_j \alpha_{i,j} }{r} \rceil \leq \lceil \frac{\sum_{j=1}^i b_j \alpha_{i,j} }{r} \rceil \le \frac{\sum_{j=1}^i b_j \alpha_{i,j}+\beta_i }{r}.$$
Therefore, $D_{X_{i+1}/X} \ge K_{X_{i+1}/X}$. We are done by
induction.
\end{proof}




Now, for the given terminal Gorenstein singularity $P \in X$, the feasible resolution $\tau_P$ as in the above Step (3) can be rephrased as:
\begin{equation}\tau_P: Z_l \to Z_{l-1} \to \cdots\to Z_1 \to X \ni P\label{res_P}\end{equation}
by setting $Z_j:=X_{n_j}$, where $Z_l$ is smooth on
$\tau_P^{-1}( P)$ and each birational morphism $Z_i \to Z_{i-1}$ is
a fG partial resolution for all $i$. Therefore Theorem \ref{P} and
 simple induction directly imply the following:

\begin{cor} \label{fGr} For the feasible resolution (\ref{res_P}), we have $D_{Z_j/X}\geq K_{Z_j/X}$ for $1 \le j \le l$.
\end{cor}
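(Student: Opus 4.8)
The plan is to deduce Corollary \ref{fGr} from Theorem \ref{P} by iterating over the tower (\ref{res_P}) of fG partial resolutions. The key observation is that each arrow $Z_i \to Z_{i-1}$ is itself a fG partial resolution of some terminal Gorenstein singularity on $Z_{i-1}$, so Theorem \ref{P} applies to it verbatim; what needs checking is that the inequalities $D_{Z_i/Z_{i-1}} \ge K_{Z_i/Z_{i-1}}$ at each stage can be added up correctly along the tower, using that pullbacks of effective divisors are effective and that the comparison $\ge$ is preserved under pushforward by $\pi^*$.

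First I would set up the induction on $j$. The base case $j=1$ is exactly Theorem \ref{P} applied to $Z_1 \to X$, so $D_{Z_1/X} \ge K_{Z_1/X}$. For the inductive step, suppose $D_{Z_{j-1}/X} \ge K_{Z_{j-1}/X}$ on $Z_{j-1}$. Denote by $\sigma_j: Z_j \to Z_{j-1}$ the fG partial resolution at the relevant point, and let $D^{(j-1)}$, $K^{(j-1)}$ denote the proper transforms on $Z_{j-1}$ of $D$ and $K_X$ respectively. Theorem \ref{P}, applied on $Z_{j-1}$ with the Cartier divisor $D^{(j-1)}$ (note $D^{(j-1)}$ is still Cartier since $Z_{j-1}$ is Gorenstein and it is a proper transform of a Cartier divisor through divisorial contractions), gives $D_{Z_j/Z_{j-1}} \ge K_{Z_j/Z_{j-1}}$, i.e.
$$\sigma_j^*(D^{(j-1)}) - D^{(j)} \ge \sigma_j^*(K^{(j-1)}) - K^{(j)}$$
as divisors on $Z_j$.

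Next I would combine this with the pulled-back inductive hypothesis. Pulling back $D_{Z_{j-1}/X} \ge K_{Z_{j-1}/X}$ by $\sigma_j$ — which preserves the inequality because $\sigma_j^*$ sends effective divisors to effective divisors — and writing $D_{Z_{j-1}/X} = \pi_{Z_{j-1}/X}^*(D) - D^{(j-1)}$, $K_{Z_{j-1}/X} = K_{Z_{j-1}} - \pi_{Z_{j-1}/X}^*(K_X)$, one gets an inequality on $Z_j$ between the pullbacks. Adding the two inequalities and using the cocycle relations $\pi_{Z_j/X}^* = \sigma_j^* \circ \pi_{Z_{j-1}/X}^*$, $K_{Z_j} = \sigma_j^* K_{Z_{j-1}} + (\text{effective exceptional})$ — which are precisely the bookkeeping identities underlying the displayed formulas in the proof of Theorem \ref{P} — the cross terms $\sigma_j^*(D^{(j-1)})$ and $\sigma_j^*(K^{(j-1)})$ cancel, leaving $D_{Z_j/X} \ge K_{Z_j/X}$. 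This closes the induction and proves the corollary for all $1 \le j \le l$.

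The main obstacle is bookkeeping rather than anything deep: one must be careful that ``$D_{Z_j/X}$'' and ``$K_{Z_j/X}$'' are defined consistently as the discrepancy-type divisors $\pi_{Z_j/X}^*(D) - (\text{proper transform})$ and $K_{Z_j} - \pi_{Z_j/X}^*(K_X)$, and that at each stage the relevant proper transform of $D$ remains an effective Cartier divisor passing through the point being resolved so that Theorem \ref{P} genuinely applies. Once the definitions are pinned down, the inequality propagates by the telescoping argument above, and indeed the excerpt already flags this as following from Theorem \ref{P} together with simple induction.
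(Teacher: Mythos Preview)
Your proposal is correct and is exactly the argument the paper intends: the paper's own ``proof'' is nothing more than the sentence preceding the corollary, ``Theorem~\ref{P} and simple induction directly imply the following,'' and you have spelled out precisely that induction---apply Theorem~\ref{P} to each fG partial resolution $Z_j\to Z_{j-1}$ and telescope via the cocycle identities for $D_{\cdot/X}$ and $K_{\cdot/X}$. The one point requiring care, which you already flag, is that the hypothesis $P\in D$ of Theorem~\ref{P} must hold for the proper transform at each stage; the paper does not elaborate on this either.
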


In the last part of this section, we focus on moving  linear systems. Suppose that $|M|$ is a moving linear system (i.e. without fixed part) on the given projective Gorenstein terminal 3-fold $X$ with $\text{Bs}|M|\neq \emptyset$.
Similar to usual resolution of indeterminancies, we can have a {\it Gorenstein resolution of indeterminancies} as follows:
\begin{enumerate}
\item[(i)] If $|M|$ is free out of singularities, i.e. $\text{Bs}|M| \cap \text{Sing}(X)= \emptyset$, then we do nothing.

\item[(ii)] If there is a point $P \in  \text{Bs}|M| \cap \text{Sing}(X)$, we take a fG-partial resolution $Z_1 \to X \ni P$ and consider the linear system $|M_1|$, where $M_1$ is the proper transform of $M$ on $Z_1$.

\item[(iii)] Inductively, we will end up with a chain of fG partial resolutions $Z_n \to \ldots \to Z_1 \to X$ so that $|M_n|$ is free out of singularities of $Z_n$ (see (\ref{res_P})), since 3-dimensional terminal singularities are isolated.

\item[(iv)] If $|M_n|$ is base point free on $Z_n$, then we stop. Note that $Z_n$ is a Gorenstein terminal 3-fold.

\item[(v)] If $|M_n|$ has base points, then $\text{Bs}|M_n|$ consists of smooth points of $Z_n$ by our construction. We then consider the usual resolution of indeterminancies over $\text{Bs}|M_n|$, say
$Z_k \to \ldots \to Z_n$, which is composed of a sequence of blow-ups along smooth points or curves by Hironaka's big theorem.

\item[(vi)] Thus we may end up with a 3-fold $Z_k$ so that $|M_k|$
is base point free. We call
\begin{equation}\mu \colon Z_k \stackrel{\tau_k}{\longrightarrow} \ldots \stackrel{\tau_{n+1}}{\longrightarrow} Z_n \stackrel{\tau_n}{\longrightarrow} \ldots \stackrel{\tau_1}{\longrightarrow} X
\label{Gres}\end{equation}
 {\it a Gorenstein resolution of indeterminancies of $|M|$}. Note that $Z_k$ is a Gorenstein terminal 3-fold in general.
\end{enumerate}




\begin{thm}\label{key} Let $|M|$ be a moving linear system on a projective Gorenstein terminal 3-fold $X$ and $D \in |M|$ be a general member.
Let $\mu: Z_k \to X$ be the  Gorenstein resolution of indeterminancies as in (\ref{Gres}). Then $2 D_{Z_k/X} \ge K_{Z_k/X}$.
\end{thm}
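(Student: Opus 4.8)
The plan is to combine Corollary \ref{fGr} (which handles the Gorenstein part of the resolution) with a classical estimate for the blow-up part (Step (v) of (\ref{Gres})). Write $\mu$ as the composition of the fG-partial resolutions $\nu\colon Z_n \to X$ and the Hironaka-type resolution $\rho\colon Z_k \to Z_n$ of the base locus of $|M_n|$. For the first stage, apply Corollary \ref{fGr} to the effective Cartier divisor $D\in|M|$: it gives $D_{Z_n/X}\ge K_{Z_n/X}$, hence a fortiori $2D_{Z_n/X}\ge K_{Z_n/X}$. Here one must be a little careful, since $D$ is only a general member and Corollary \ref{fGr} was stated for a fixed Cartier divisor $D$ with $P\in D$; but a general $D\in|M|$ passes through every base point of $|M|$ lying on $\mathrm{Sing}(X)$, and $|M|$ being moving, $D$ is reduced through those points, so the multiplicity estimates in Theorem \ref{P} apply verbatim. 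I would record this as the first step.

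The second step is the blow-up stage $\rho\colon Z_k\to Z_n$, which by construction is a composition of blow-ups $W_{j+1}\to W_j$ along a smooth point or a smooth curve contained in $\mathrm{Bs}|M_j|$, all lying in the smooth locus. At such a center of codimension $c\in\{2,3\}$, one has $K_{W_{j+1}}=\sigma^*K_{W_j}+(c-1)F$ while $\sigma^*(D_{W_j})=D_{W_{j+1}}+m_jF$, where $m_j=\mathrm{mult}_{\,\text{center}}\,D_{W_j}\ge 1$ because the center lies in the base locus of the moving system $|M_j|$. Thus the coefficient of the new exceptional divisor satisfies $2m_j\ge 2\ge c-1$ in every case. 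Propagating this through the tower exactly as in the induction of Theorem \ref{P} — at each stage $2$ times the $D$-coefficient dominates the $K$-coefficient on the already-created exceptional divisors by the inductive hypothesis, and on the new one by the inequality just noted — yields $2D_{Z_k/Z_n}\ge K_{Z_k/Z_n}$ on the divisors created by $\rho$.

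The third step is to glue the two estimates. Pulling the first inequality back by $\rho$ and adding, using $D_{Z_k/X}=\rho^*D_{Z_n/X}+D_{Z_k/Z_n}$ and $K_{Z_k/X}=\rho^*K_{Z_n/X}+K_{Z_k/Z_n}$ (both valid since $\rho$ is a sequence of smooth blow-ups, so there is no discrepancy feedback from $X$), gives $2D_{Z_k/X}\ge 2\rho^*D_{Z_n/X}+K_{Z_k/Z_n}\ge \rho^*K_{Z_n/X}+K_{Z_k/Z_n}=K_{Z_k/X}$, as desired.

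The main obstacle I anticipate is the bookkeeping in the mixed induction: one must track that, after a blow-up along a curve, the multiplicity of $D_{W_j}$ along that curve is genuinely $\ge 1$ (it is, since the curve sits in $\mathrm{Bs}|M_j|$ and $D_{W_j}$ is the proper transform of a general member), and that the factor $2$ is never eroded by the transformation formulae for the older exceptional divisors — the coefficients $\alpha_{i,j}\ge 0$ guarantee monotonicity, so the factor survives, but this is exactly the point where the ``$2$'' (rather than ``$1$'') is needed, since a codimension-$3$ blow-up produces discrepancy $2$. Handling the degenerate possibility that the proper transform of $D$ already contains an exceptional divisor (forcing some $\beta$ to jump) is a minor case that the inequality absorbs with room to spare.
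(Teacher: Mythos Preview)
Your proposal is correct and follows essentially the same route as the paper: the paper shows $D_{Z_{i+1}/Z_i}\ge K_{Z_{i+1}/Z_i}$ for $i<n$ by Theorem~\ref{P} (your citation of Corollary~\ref{fGr} is slightly off, since the chain $Z_n\to\cdots\to X$ may visit several singular points, but the content is the same repeated use of Theorem~\ref{P}), and $2D_{Z_{i+1}/Z_i}\ge 2E_{i+1}\ge K_{Z_{i+1}/Z_i}$ for $i\ge n$ exactly as you argue with $2m_j\ge c-1$. The gluing/induction via $D_{Z_{i+1}/X}=\tau_{i+1}^*D_{Z_i/X}+D_{Z_{i+1}/Z_i}$ and the analogous formula for $K$ is precisely your third step, so the two proofs coincide.
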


\begin{proof} We keep the notation as in above Steps (i)$\sim$ (vi).
For each $i <n$, we have $D_{Z_{i+1} / Z_i} \ge K_{Z_{i+1} / Z_i}$  by Theorem \ref{P}. For each $i \ge n$, $\tau_{i+1}$ is a blowup along a smooth curve or a smooth point, contained in $D_{Z_i}$. Let $E_{i+1}$ be the exceptional divisor.
Then
$2D_{Z_{i+1} / Z_i} \ge 2 E_{i+1} \ge K_{Z_{i+1} / Z_i}$.
Since $D_{Z_{i+1} / X} = \tau_{i+1}^* D_{Z_{i} / X} +D_{Z_{i+1} / Z_i}$ and $K_{Z_{i+1} / X} = \tau_{i+1}^* K_{Z_{i} / X} +K_{Z_{i+1} / Z_i}$. The statement now follows easily by induction.
\end{proof}

\section{\bf The canonical family of curves of genus 2}

Let $X$ be a projective Gorenstein minimal 3-fold of general type.  The fact that $K_X^3$ being even allows us to assume $p_g(X)\geq 5$ in order to prove Theorem \ref{main}. Thus we may always consider the non-trivial canonical map $\varphi_1$.  Set $d:=\dim \overline{\varphi_1(X)}$.

The following inequalities are already known:
\begin{itemize}
\item[I.]  If $d\neq 2$, then
$$K_X^3\geq \text{min}\{2p_g(X)-6, \ \frac{7}{5}p_g(X)-2\}$$
 by \cite[Theorem 5 (1)]{JMSJ} and Catanese--Chen--Zhang \cite[Theorem 4.1]{CCZ}.

\item[II.]  If $d=2$ and $X$ is canonically fibred by curves $C$ of genus $g(C )\geq 3$, then  $K_X^3\geq 2p_g(X)-4$ by \cite[Theorem 4.1(ii)]{JMSJ}.
\end{itemize}

\begin{thm}\label{g2}
Let $X$ be a projective minimal smooth 3-fold of general type.
Suppose that $d=2$ and $X$ is canonically fibred by curves of genus $2$. Then
$$K_X^3\ge \frac{1}{3}(4p_g(X)-10).$$
The inequality is sharp.
\end{thm}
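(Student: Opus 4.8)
The plan is to study the canonical fibration directly on a suitable blow-up of $X$. Fix a resolution $\pi\colon X'\to X$ together with the induced morphism $f\colon X'\to B$ onto a normal projective surface $B$, where the general fiber $C$ of $f$ is a smooth curve of genus $2$; this exists because $d=2$ and $X$ is canonically fibred by genus $2$ curves. Write $K_{X'}=\pi^*K_X+E_\pi$ with $E_\pi$ effective $\pi$-exceptional, and decompose $\pi^*|K_X|$ on $X'$ as $|M'|+Z'$, where $|M'|$ is the moving part and $Z'$ the fixed part; since $d=2$ and the general fiber has genus $2$, the general member of $|M'|$ is vertical over $B$ in the sense that $M'\sim f^*H$ for some big and nef (after further blow-up) divisor $H$ on $B$ with $h^0(B,H)=h^0(X,K_X)=p_g(X)$. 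The first step is therefore to reduce, via the theory of curves of genus $2$ (the relative canonical map of $f$ contracts each fiber $2$-to-$1$ onto $\mathbb P^1$), to inequalities on the surface $B$: one obtains $H^2\ge$ roughly $2p_g(X)-O(1)$ by the $2$-dimensional Noether-type inequality applied to $(B,H)$, while simultaneously relating $K_{X'}^3$ to $K_{X'}^2\cdot f^*H$ and to $f^*H\cdot (\text{something})^2$.

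The core computation is then a sequence of intersection-number estimates on $X'$. Writing $S$ for a general member of $|M'|$ (a smooth surface mapping to a curve in $B$) and $C$ for a general fiber of $f$, I would establish the three inequalities: (a) $K_{X'}^2\cdot S\ge K_{X'}\cdot S^2 + (\text{correction})$, coming from adjunction on $S$ together with the fact that $K_S=(K_{X'}+S)|_S$ and that $S\to$ its image is a fibration in genus $2$ curves; (b) $K_{X'}\cdot S^2\ge 2\,K_{X'}\cdot C\cdot(\deg\text{ of the image curve})$, using that $S|_S$ is numerically $\ge$ a positive multiple of the fiber class; and (c) $K_{X'}\cdot C = K_X\cdot(\pi_*C)\ge 2g(C)-2 = 2$ by adjunction on the minimal model, with equality forced iff $C$ meets the $\pi$-exceptional locus trivially and $K_X\cdot C=2$. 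Feeding the surface estimate $H^2\ge 2p_g(X)-6$ (or the relevant genus-$2$-fibration refinement giving the coefficient $\tfrac43$) into (a)–(c) and using $K_X^3\ge \pi^*K_X\cdot M'\cdot M' = f^*H\cdot f^*H\cdot \pi^*K_X$ type bounds, together with $K_X^3\ge (\pi^*K_X)^2\cdot S$, should produce $K_X^3\ge \tfrac13(4p_g(X)-10)$ after collecting terms; the constant $-10/3$ will emerge precisely from the $-6$ in the surface Noether inequality combined with the two "$-2$'' contributions from the two ends of the genus-$2$ fiber under the bicanonical map $C\to\mathbb P^1$.

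The main obstacle I anticipate is \emph{controlling the fixed part $Z'$ and the exceptional divisor $E_\pi$ simultaneously}, i.e. the possibility that the base locus of $|K_X|$ passes through the Gorenstein terminal singularities of $X$ so that, after any honest resolution, $\pi^*K_X=M'+Z'$ has a large vertical-plus-exceptional fixed part which one cannot simply discard. This is exactly where Theorem~\ref{P}, Corollary~\ref{fGr} and Theorem~\ref{key} enter: instead of an arbitrary resolution I would run the Gorenstein resolution of indeterminacies $\mu\colon Z_k\to X$ of $|K_X|$ from (\ref{Gres}), so that on $Z_k$ the moving part $|M_k|$ is base-point free and, crucially, $2D_{Z_k/X}\ge K_{Z_k/X}$ for a general $D\in|M_k|$; this lets me trade a factor of $2$ between the relative canonical divisor and the relative member $D$, which is precisely the slack needed to pass from the naive coefficient to $\tfrac43$. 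The remaining delicate point is the sharpness claim: one must exhibit (or cite, following Kobayashi's examples with $p_g=3k+4$, $K^3=4k+2$) a family realizing equality, which forces every inequality above to be an equality — in particular $K_X\cdot C=2$, the image of $X$ in $B$ is a scroll (so $H^2=2p_g-6$ is attained), and the genus $2$ fibration has the minimal possible slope; verifying that these can coexist is the content of the sharpness statement.
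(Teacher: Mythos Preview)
Your setup is right---taking the Gorenstein resolution of indeterminacies $\mu\colon X'\to X$, the induced fibration $f$, a general member $S$ of the moving part, and invoking Theorem~\ref{key}---and this matches the paper. But the core computation you propose has a genuine gap at step~(c), and the missing idea is exactly what drives the paper's argument.

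Your claim $K_X\cdot(\pi_*C)=K_{X'}\cdot C\ge 2$ conflates two different numbers. By the projection formula $K_X\cdot\pi_*C=\pi^*K_X\cdot C$, while $K_{X'}\cdot C=\pi^*K_X\cdot C+E_\pi\cdot C$; these agree only when the general fiber avoids the exceptional locus. In fact the \emph{entire difficulty} of the theorem is the case $(\mu^*K_X\cdot C)=1$: then $E_\pi\cdot C=1$, so a component of the exceptional locus is \emph{horizontal} over $W$, and your inequality~(c) fails. Once $(\mu^*K_X\cdot C)\ge 2$ one gets $K_X^3\ge 2p_g-4$ immediately from $S|_S\equiv aC$ with $a\ge p_g-2$; no surface Noether on $B$ is needed, and the paper does not use it.

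What the paper does in the hard case $(\mu^*K_X\cdot C)=1$ is this: write $E'=D_{X'/X}+\mu^*F$ and split $E'|_S=E'_V+E'_H$ and $K_{X'/X}|_S=E_V+E_H$ into vertical and horizontal parts. Both horizontal parts meet $C$ once, hence are irreducible sections; Theorem~\ref{key} ($2E'\ge K_{X'/X}$) forces $E'_H=E_H=:G$ and $2E'_V\ge E_V$. Two short computations on $S$---one using adjunction $(K_S\cdot G)\ge -2-G^2$, the other using $S|_S\cdot G\ge p_g-2$---combine to give $3(\mu^*K_X|_S\cdot G)\ge p_g-4$. Adding this to the main term $(\mu^*K_X|_S\cdot S|_S)\ge p_g-2$ yields $K_X^3\ge (p_g-2)+\tfrac13(p_g-4)=\tfrac13(4p_g-10)$. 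The constant $-10/3$ thus comes from $-2$ and $-4/3$, not from a $-6$ of a two-dimensional Noether inequality; your proposed route through $H^2$ on $B$ never engages with the horizontal section $G$, which is the heart of the matter.
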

\begin{proof}
Write $|K_X|=|M|+F$, where $|M|$ is the moving part and $F$ is the fixed part. Let
$$\mu: X'=Z_k \to \ldots \to Z_1 \to X$$ be the Gorenstein resolution of indeterminancies as (\ref{Gres}). Let $g=\varphi_1 \circ \mu$
and take the Stein factorization, we have the induced fibration $f \colon X'\lrw W$.


A general fiber of $f$ is a smooth curve of genus $2$ by assumption of the theorem. Let $D$ be a general member of $|M|$ and $S:=D_{X'}$ be the general member of the moving part of
$|\mu^*M|$. Then
we  have
$$\mu^*K_X=\mu^*{M}+\mu^*F=S+D_{X'/X}+\mu^*F.$$
Set $E':=D_{X'/X}+\mu^*F$.

On the surface $S$, set $L:=\mu^*(K_X)|_{S}$. We also have
$S|_{S}\equiv aC$ where $a\ge p_g(X)-2$ and $C$ is a general fiber of the restricted fibration
$f|_{S}: S\lrw f(S).$
Note that the above $C$ lies in the same numerical class as that of a general fiber of $f$.
 One has
 $$ (\mu^*K_X^2 \cdot S) \ge (\mu^*K_X \cdot_S S) \ge a (L \cdot C)  \ge (L\cdot C)(p_g(X)-2).$$
If $(L \cdot C) \ge 2$, then we have already $K_X^3 \ge (\mu^*K_X^2 \cdot S) \ge 2p_g(X)-4$.
It remains to consider the case $(L\cdot C)=1$. Note that, in this situation, $|M|$ must have base points. Otherwise, $\mu=\text{identity}$ and
$$(L\cdot C)=(K_X|_{S}\cdot C)=((K_X+S)|_{S}\cdot C)=(K_{S}\cdot C)=2,$$
a contradiction.

Denote $E'|_{S}:=E_V'+E_H'$, where $E_V'$ is the vertical part and $E_H'$ is the horizontal part with respect to $f|_S$.
Since $(E'_H \cdot C)=(E'|_{S}\cdot C)=(L\cdot C)=1$,  $E_H'$ is an irreducible curve and is a section of the restricted fibration $f|_{S}$.

Denote $K_{X'/X}|_{S}:=E_V+E_H$, where $E_V$ is the vertical part and $E_H$ is the horizontal part. {}From $(K_{S}\cdot C)=2$, one sees that $(K_{X'/X}|_S \cdot C)=1$ and hence
$(E_H\cdot C)=1$. This also means that $E_H$ is an irreducible curve and we may assume that $E_H=E_0|_S$ for some $\mu$-exceptional divisor $E_0$.
Notice that $2E' \ge K_{X'/X}$ by Theorem \ref{key}. In particular $E_0$ is contained in $E'$. Therefore, $E'_H=E_H$ and  $2E'_V \ge E_V$.



Let $G:=E_H=E'_H$.
Since $2E_V'-E_V$ is effective and vertical, we see that
$ 2(E_V'\cdot G)  \ge (E_V\cdot G) $. On the surface $S$, we have
$$\begin{array}{ll}
& (2\mu^*K_X|_S +E_V') \cdot G\\
=& (\mu^*K_X|_S+S|_S+2E_V'+E_H') \cdot G \\
\ge & (\mu^*K_X|_S+S|_S+ E_V+E_H) \cdot G \\
=& (\mu^*K_X+K_{X'/X}|_S+S|_S) \cdot G\\
=& (K_S \cdot G) \ge -2-G^2
\end{array}$$
We also have
$$\begin{array}{ll} (\mu^*K_X|_{S}-E_V' )\cdot G &=(S|_{S}\cdot G)+ (E_H' \cdot G) \\
                                            &=a(C \cdot G) + G^2 \\
                                             & \ge p_g(X)-2 +G^2. \end{array}$$
Combining these, we get $3(\mu^*(K_X)|_{S}\cdot G)\ge p_g(X)-4$ and therefore
$$(\mu^*(K_X)|_S \cdot E'|_S) \ge
(\mu^*(K_X)|_{S}\cdot G)\ge\frac{1}{3}(p_g(X)-4).$$
{}Finally we have
$$
\begin{array}{ll}
K_X^3&=\mu^*(K_X)^3\ge (\mu^*(K_X)^2\cdot S)\\
&= (\mu^*(K_X)|_{S}\cdot S|_{S})+(\mu^*(K_X)|_{S}\cdot E'|_{S})\\
&\ge (p_g(X)-2)+\frac{1}{3}(p_g(X)-4)=\frac{2}{3}(2p_g(X)-5).
\end{array}
$$
The inequality is sharp by virtue of Kobayashi's example \cite{Kob}.
\end{proof}

Theorem \ref{main} follows directly from known results I, II and Theorem \ref{g2}.

We would like to ask the following:
\begin{op} Is the inequality $K_X^3\geq \frac{4}{3}p_g(X)-\frac{10}{3}$ true for any projective minimal 3-fold $X$ of general type?
\end{op}

Some known results includes:  if $p_g(X) \ge 3$, then $K_X^3 \ge
1$ and if $p_g(X) \ge 4$, then $K_X^3 \ge 2$ (cf. \cite[Theorem
1.5]{MAnn}).

\end{document}